\documentclass[10pt,twoside,reqno]{amsart}
\usepackage[all]{xy}
        \usepackage {amssymb,latexsym,amsthm,amsmath,mathtools,dsfont,mathrsfs}
        \usepackage{enumitem,color}

\usepackage[
  top=1in,
  bottom=1in,
  left=1.2in,
  right=1.2in
]{geometry}
\usepackage{makecell}
\usepackage{mathtools}

\usepackage{longtable}
\usepackage{framed}

\usepackage[backend=bibtex,backref=true,maxbibnames=999,doi=false,isbn=false,url=false
]{biblatex} 
\usepackage{float}
\usepackage{hyperref}
\long\def\symbolfootnote[#1]#2{\begingroup%
\def\thefootnote{\fnsymbol{footnote}}\footnote[#1]{#2}\endgroup}

\makeatletter
\def\imod#1{\allowbreak\mkern10mu({\operator@font mod}\,\,#1)}
\makeatother
\makeatletter
\renewcommand*\env@matrix[1][*\c@MaxMatrixCols c]{%
  \hskip -\arraycolsep
  \let\@ifnextchar\new@ifnextchar
  \array{#1}}
\makeatother
\usepackage[capitalize,nameinlink]{cleveref} 
\usepackage{comment} 
\usepackage{xcolor} 
\hypersetup{
    colorlinks,
    linkcolor={red!80!black},
    citecolor={green!80!black},
    urlcolor={blue!80!black}
}

\newtheorem{theorem}{Theorem}[section]
\newtheorem{lemma}[theorem]{Lemma}

\newtheorem*{theorem*}{Theorem}
\theoremstyle{definition}

\newtheorem{example}[theorem]{Example}

\newtheorem{conjecture}[theorem]{Conjecture}

\numberwithin{equation}{section}
\newcommand{\ignore}[1]{}

\newcommand{\mynote}[1]{}
\title[A note on idempotents in quandle rings]{A note on idempotents in quandle rings}
\author[Dilpreet Kaur]{Dilpreet Kaur}
\email{dilpreetkaur@iitj.ac.in}
\address{Indian Institute of Technology Jodhpur
N.H. 62, Nagaur Road, Karwar Jodhpur 342030
Rajasthan}

\author[Pushpendra Singh]{Pushpendra Singh}
\email{pushpendra@iisermohali.ac.in}
\address{Indian Institute of Science Education and Research Mohali, Sector 81, Mohali 140306, India}
\thanks{}
\date{\today}
\subjclass[2020]{17D99,20N02}
\keywords{Quandles, Quandle rings, Affine quandles}
\begin{document}
\setcounter{section}{0}
\begin{abstract}
It was conjectured that the nonzero idempotents of the integral quandle ring of a finite latin quandle are trivial. We prove this holds for latin dihedral quandles. Furthermore, we discuss counterexamples against this conjecture.
\end{abstract}
\maketitle
\section{Introduction}
A quandle is an algebraic structure $(Q,\triangleright)$ where the binary operation satisfies the following properties.
\begin{enumerate}
\item 
The map $R_x: Q \rightarrow Q,~R_x(y)=y \triangleright x$ is an automorphism for all $x \in Q$.
\item 
$R_x(x)=x$ for all $x\in Q$.
\end{enumerate}

An affine quandle ${\rm Aff}(A,f)$ is an additive abelian group $A$ with $f \in {\rm Aut}(A)$ with quandle operation $x\triangleright y=f(x)+(1-f)(y)$. In particular, if $A=\mathbb Z_n$ and $f(x)=-x$, then the affine quandle is known as a dihedral quandle. We denote it by $\mathcal{R}_n$.

A quandle ring is defined analogously to a group ring. It is introduced in \cite{BPS19}. Let $K$ be an associative ring. Consider the following set of finite linear combinations.
\[K[Q] = \left\{\sum\limits_{i}a_xx\;; a_x \in K,x \in Q \right\}\]
Then $K[Q]$ forms an abelian group with the usual addition. The following multiplication equips it with a ring structure.
\[
\left( \sum\limits_{x}a_x x  \right) \cdot \left( \sum\limits_{y}b_y y  \right)= \sum\limits_{x,y} a_xb_y(x\triangleright y) 
\]

The mapping $\varepsilon: K[Q]\rightarrow K, \sum\limits_{x}a_x x \mapsto \sum\limits_{x}a_x$ is a ring homomorphism. This is known as augmentation mapping.

The authors give various results about quandles $Q$ by studying properties of $K[Q]$. Furthermore, the authors in  \cite{ENS23} and \cite{ENSD23} study the idempotent elements of $K[Q]$ and give applications of idempotents in knot theory for the construction of invariants of knots and links. In particular, they state the following conjectures.

\begin{conjecture}\cite[Conjecture 3.10]{ENSD23} \label{c1}
The integral quandle ring of a semi-latin quandle has only trivial idempotents. In particular, the integral quandle ring of a finite latin quandle has only trivial idempotents.
\end{conjecture}

\begin{conjecture} \cite[Conjecture 7.6]{ENS23} \label{c2}
The Peirce spectrum of the complex quandle algebra of the dihedral quandle of odd order is $\{0,1,-1\}$.
\end{conjecture}

In the section \ref{sec2}, we prove that the integral quandle ring of latin dihedral quandles has only trivial idempotents. Furthermore, in section \ref{sec3}, we compute the Peirce spectrum of $\mathbb C[\mathcal R_n]$ for $n$ odd. In section \ref{sec4}, we show that for an integral ring of affine connected quandles of prime order, we can construct infinitely many nontrivial idempotents given a nontrivial idempotent.

\section{Idempotents in $\mathbb Z[\mathcal R_n]$} \label{sec2}

The dihedral quandle $\mathcal R_n$ is a latin quandle for $n$ odd. In this section, we assume that $n$ is odd. The quandle operation on $\mathbb Z_n$ is $x\triangleright y=2y-x$.

The quandle ring $\mathbb Z[\mathcal R_n]$ is a free $\mathbb Z$-module with basis $\{e_x\;;x\in \mathcal R_n \}$ with the multiplication on basis elements given as $e_xe_y=e_{x\triangleright y}=e_{2y-x}$. The mapping $x\mapsto e_x$ is an embedding of quandle $\mathcal R_n$ into $\mathbb Z[\mathcal R_n]$.

Let $u=\sum\limits_{x\in \mathbb Z_n}a_xe_x$ be an element in $\mathbb Z[\mathcal R_n]$. We solve $u^2=u$ and find the coefficients $a_x$.
\begin{align*}
u^2= \left( \sum\limits_{x\in \mathbb Z_n}a_xe_x \right)\cdot \left( \sum\limits_{y\in \mathbb Z_n} a_ye_y \right)&= \sum\limits_{x,y\in \mathbb Z_n} a_xa_y e_{2y-x} \\
&=\sum\limits_{y,z\in \mathbb Z_n}a_{2y-z}a_ye_z
\end{align*} 
Let $u^2=\sum\limits_{z\in \mathbb Z_n}c_ze_z$, then we have $c_z=\sum\limits_{y\in \mathbb Z_n}a_{2y-z}a_y$. The equality $u^2=u$ means
\[ a_z= \sum\limits_{y\in \mathbb Z_n} a_{2y-z}a_y \quad \quad   \text{ for all } z \in \mathbb Z_n\]

We solve the above system of equations using fourier transform on $\mathbb Z_n$. Let $\zeta=e^{i2\pi/n}$. Using the coefficients of $u$, define a map $\hat{a}: \mathbb Z_n \rightarrow \mathbb C, r \mapsto \sum\limits_{x\in \mathbb Z_n}a_x \zeta^{rx}$.

\begin{lemma} \label{ortho}
Let $x,y\in \mathcal R_n$. Then
\[ \sum\limits_{r\in \mathbb Z_n}\zeta^{r(x-y)}=\begin{cases}
    n \text{ if } x=y\\
    0 \text{ if } x\neq y
\end{cases} \]
\end{lemma}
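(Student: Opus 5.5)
This is the standard orthogonality relation for the $n$-th roots of unity, and I will prove it by summing a geometric series. Throughout, $\zeta=e^{i2\pi/n}$. Set $d=x-y\in\mathbb Z_n$. The first step is to check that the summand $\zeta^{rd}$ is well defined on $\mathbb Z_n$. Since $\zeta^n=1$, the value $\zeta^{rd}$ depends only on the residue of $rd$ modulo $n$. It is therefore independent of the representatives chosen for $r$ and $d$, so the sum $\sum_{r\in\mathbb Z_n}\zeta^{rd}$ makes sense, and I may take $r$ to run over $0,1,\dots,n-1$.

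\textbf{Case $x=y$.} Here $d=0$, so every summand is $\zeta^0=1$. There are $n$ terms, so the sum equals $n$.

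\textbf{Case $x\neq y$.} Choose a representative $d\in\{1,\dots,n-1\}$ and put $w=\zeta^{d}$. Because $\zeta$ is a primitive $n$-th root of unity, $\zeta^{d}=1$ holds exactly when $n\mid d$, so $w\neq 1$. On the other hand, $w^n=(\zeta^n)^d=1$. The geometric series formula then gives
\[
\sum_{r=0}^{n-1} w^r=\frac{w^n-1}{w-1}=0 .
\]

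The only point that needs any care is the claim $w\neq 1$ when $x\neq y$. It follows from the primitivity of $\zeta$: the first $n$ powers $\zeta^0,\dots,\zeta^{n-1}$ of $e^{i2\pi/n}$ are distinct points of the unit circle. Nothing in the argument uses that $n$ is odd, or anything about the quandle structure of $\mathcal R_n$ beyond its underlying set $\mathbb Z_n$.
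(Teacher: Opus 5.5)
Your proof is correct and follows the paper's argument: for $x=y$ the sum has $n$ terms equal to $1$, and for $x\neq y$ you sum the geometric series in $q=\zeta^{x-y}\neq 1$ to get $(q^n-1)/(q-1)=0$. Your remarks on why the sum is well defined over $\mathbb Z_n$, and why $q\neq 1$ follows from primitivity of $\zeta$, spell out points the paper leaves implicit.
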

\begin{proof}
The expression gives $n$ for $x=y$. For $x\neq y$, let $q=\zeta^{x-y}$. Then $q\neq 1$. We have $\sum\limits_{r\in \mathbb Z_n}q^r = \frac{q^n-1}{q-1}=0$, since $q^n=1$. 
\end{proof}

\begin{lemma}\label{inv}
Let $x\in \mathcal R_n$. Then we have
\[a_x= \frac{1}{n} \sum\limits_{r\in \mathbb Z_n} \hat{a}(r)\zeta^{-rx}\]
Furthermore,
\[ \sum\limits_{r\in \mathbb Z_n} |\hat{a}(r)|^2=n \sum\limits_{x\in \mathbb Z_n} a_x^2 \]
\end{lemma}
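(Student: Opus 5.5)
Both assertions are standard facts about the discrete Fourier transform on $\mathbb Z_n$. We will derive each one by substituting the definition $\hat{a}(r)=\sum_{y\in\mathbb Z_n}a_y\zeta^{ry}$, exchanging the two finite sums, and applying the orthogonality relation of Lemma \ref{ortho}. No convergence issues arise, since every sum is finite.

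For the inversion formula, fix $x\in\mathcal R_n$ and expand:
\[
\frac{1}{n}\sum_{r\in\mathbb Z_n}\hat{a}(r)\zeta^{-rx}=\frac{1}{n}\sum_{r\in\mathbb Z_n}\sum_{y\in\mathbb Z_n}a_y\zeta^{r(y-x)}=\frac{1}{n}\sum_{y\in\mathbb Z_n}a_y\sum_{r\in\mathbb Z_n}\zeta^{r(y-x)}.
\]
By Lemma \ref{ortho}, the inner sum equals $n$ when $y=x$ and $0$ otherwise. The whole expression therefore collapses to $a_x$.

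For the Parseval identity, we use that the $a_x$ are integers, hence real. This gives $\overline{\hat{a}(r)}=\sum_{y}a_y\zeta^{-ry}$, because $\overline{\zeta}=\zeta^{-1}$. We then compute
\[
\sum_{r\in\mathbb Z_n}|\hat{a}(r)|^2=\sum_{r\in\mathbb Z_n}\hat{a}(r)\overline{\hat{a}(r)}=\sum_{x,y\in\mathbb Z_n}a_xa_y\sum_{r\in\mathbb Z_n}\zeta^{r(x-y)}.
\]
Lemma \ref{ortho} again kills every term with $x\neq y$, leaving $n\sum_{x}a_x^2$.

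There is no genuine obstacle here. The only points requiring care are that the exponents are well defined modulo $n$, since $\zeta^n=1$, and that the reality of the $a_x$ is what lets us write the conjugate as $\sum_y a_y\zeta^{-ry}$.
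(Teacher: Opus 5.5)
Your proposal is correct and follows essentially the same route as the paper: for both the inversion formula and the Parseval identity, you substitute the definition of $\hat{a}$, exchange the finite sums, and apply Lemma \ref{ortho}. You also state explicitly that the $a_x$ are real, which is what justifies $\overline{\hat{a}(r)}=\sum_y a_y\zeta^{-ry}$. The paper uses this step silently: it writes $\bar{a_y}$ and then replaces it by $a_y$ without comment.
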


\begin{proof}
Expanding \textit{rhs} by substituting for $\hat{a}(r)$, we get
\[\frac{1}{n} \sum\limits_{r\in \mathbb Z_n} \hat{a}(r)\zeta^{-rx}= \frac{1}{n}\sum\limits_{r,y\in \mathbb Z_n}a_y\zeta^{ry}\zeta^{-rx}=\frac{1}{n}\sum\limits_{y\in \mathbb Z_n}a_y \sum\limits_{r\in \mathbb Z_n}\zeta^{r(y-x)}=a_x\]
where the last equality follows using Lemma \ref{ortho}.
Now we have 
\begin{align*}
\sum\limits_{r\in \mathbb Z_n} |\hat{a}(r)|^2= \sum\limits_{r\in \mathbb Z_n} \hat{a}(r)\overline{\hat{a}(r)}&= \sum\limits_{r\in \mathbb Z_n} \left( \sum\limits_{x\in \mathbb Z_n}a_x\zeta^{rx}  \right) \left( \sum\limits_{y\in \mathbb Z_n} \bar{a_y} \zeta^{-ry} \right)\\
&= \sum\limits_{x,y\in \mathbb Z_n}a_xa_y \sum\limits_{r\in \mathbb Z_n}\zeta^{r(x-y)}= n\sum\limits_{x\in \mathbb Z_n} a_x^2\\
\end{align*}
\end{proof}

Now using coefficients of $u^2$, we define function $\hat{c}: \mathbb Z_n \rightarrow \mathbb C, r \mapsto \sum\limits_{z\in \mathbb Z_n}c_z\zeta^{rz}$. 

\begin{lemma}\label{idemeq}
Let $u^2=u$. Then $\hat{a}(r)=\hat{a}(-r)\hat{a}(2r)$.
\end{lemma}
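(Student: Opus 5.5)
We compute $\hat{c}(r)$ directly from the formula $c_z=\sum_{y}a_{2y-z}a_y$ and reindex. The plan is first to show $\hat{c}(r)=\hat{a}(-r)\hat{a}(2r)$ for every $r$, whether or not $u$ is idempotent. Then, if $u^2=u$, we have $c_z=a_z$ for all $z$, so $\hat{c}=\hat{a}$ and the lemma follows.

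For the computation, write
\[
\hat{c}(r)=\sum_{z\in\mathbb Z_n}\sum_{y\in\mathbb Z_n}a_{2y-z}a_y\,\zeta^{rz}.
\]
Set $x=2y-z$. For fixed $y$, the map $z\mapsto 2y-z$ is a bijection of $\mathbb Z_n$, so we may sum over $(x,y)\in\mathbb Z_n^2$ with $z=2y-x$. Since $\zeta^n=1$, the quantity $\zeta^{rz}$ depends only on $z$ modulo $n$, so $\zeta^{r(2y-x)}=\zeta^{2ry}\zeta^{-rx}$ is well defined. The double sum then factors:
\[
\hat{c}(r)=\sum_{x,y\in\mathbb Z_n}a_xa_y\,\zeta^{-rx}\zeta^{2ry}=\Bigl(\sum_{x}a_x\zeta^{(-r)x}\Bigr)\Bigl(\sum_{y}a_y\zeta^{(2r)y}\Bigr)=\hat{a}(-r)\,\hat{a}(2r).
\]

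The only point that needs care is the reindexing, namely checking that $(y,z)\mapsto(2y-z,y)$ is a bijection of $\mathbb Z_n^2$. This holds because the inverse is $(x,y)\mapsto(y,2y-x)$. The argument does not use that $n$ is odd; that hypothesis will matter only later, when $r\mapsto 2r$ is used as a bijection.
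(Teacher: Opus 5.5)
Your proposal is correct and follows the paper's proof: you compute $\hat{c}(r)$ via the substitution $x=2y-z$, factor it as $\hat{a}(-r)\hat{a}(2r)$, and then use $c_z=a_z$ when $u^2=u$. You also check explicitly that the reindexing is a bijection, and you correctly note that oddness of $n$ is not needed at this step.
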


\begin{proof}
Substituting for $c_z$ in $\hat{c}(r)$, we get
\[ \hat{c}(r)=\sum\limits_{z\in \mathbb Z_n} \left( \sum\limits_{y\in \mathbb Z_n}a_{2y-z}a_y \right) \zeta^{rz}\]
Put $x=2y-z$, then we have
\begin{align*}
\hat{c}(r)&=\sum\limits_{x,y\in \mathbb Z_n}a_xa_y \zeta^{r(2y-x)}\\
&= \left( \sum\limits_{x\in \mathbb Z_n}a_x\zeta^{-rx}  \right) \cdot \left( \sum\limits_{y\in \mathbb Z_n}a_y \zeta^{2ry} \right)\\
&= \hat{a}(-r)\hat{a}(2r)
\end{align*}
Now $u^2=u$ implies $c_z=a_z$ for all $z\in \mathbb Z_n$, and so $\hat{c}(r)=\hat{a}(r)$. Thus $\hat{a}(r)=\hat{a}(-r)\hat{a}(2r)$ for all $r\in \mathbb Z_n$.
\end{proof}

Now, below we work under the condition that $u^2=u$.
\begin{lemma} \label{abval}
Let $n$ be odd, then for $r \in \mathbb Z_n$, either $\hat{a}(r)=0$ or $|\hat{a}(r)|=1$.
\end{lemma}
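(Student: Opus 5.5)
The proof combines the identity of Lemma~\ref{idemeq} with the fact that the coefficients $a_x$ are real, and then uses that doubling is invertible on $\mathbb Z_n$ when $n$ is odd.

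First, since every $a_x$ is an integer and hence real, we have
\[\overline{\hat a(r)}=\sum_{x\in\mathbb Z_n}a_x\zeta^{-rx}=\hat a(-r).\]
Substituting this into Lemma~\ref{idemeq} gives $\hat a(r)=\overline{\hat a(r)}\,\hat a(2r)$. Taking absolute values yields
\[|\hat a(r)|=|\hat a(r)|\,|\hat a(2r)| \quad\text{for all } r\in\mathbb Z_n.\]
Hence, whenever $\hat a(r)\neq 0$, we may cancel $|\hat a(r)|$ and conclude $|\hat a(2r)|=1$.

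Next we use that $n$ is odd. Then $2$ is a unit in $\mathbb Z_n$, so it has a finite multiplicative order $k\ge 1$ with $2^k\equiv 1 \pmod n$. Consequently $2^k r=r$ for every $r\in\mathbb Z_n$.

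Now fix $r$ with $\hat a(r)\neq 0$; the case $\hat a(r)=0$ needs nothing. By the previous step $|\hat a(2r)|=1$, and in particular $\hat a(2r)\neq 0$. Applying the same step to $2r$ gives $|\hat a(4r)|=1$. Inducting, $|\hat a(2^j r)|=1$ for every $j\ge 1$. Taking $j=k$ gives $|\hat a(r)|=|\hat a(2^k r)|=1$, as required.

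For $r=0$ the argument is consistent with a direct check: Lemma~\ref{idemeq} gives $\hat a(0)=\hat a(0)^2$, so $\hat a(0)\in\{0,1\}$.

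The only delicate point is that the absolute-value identity by itself only controls $|\hat a(2r)|$, not $|\hat a(r)|$. The step that transfers the information back to $r$ is the closing-up of the doubling orbit, which is exactly where oddness of $n$ is used. For even $n$ the doubling map is not a bijection, and this argument would not close.
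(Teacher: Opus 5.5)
Your proof is correct and follows the paper's route. You substitute $\hat a(-r)=\overline{\hat a(r)}$ into Lemma~\ref{idemeq} to get $|\hat a(2r)|=1$ whenever $\hat a(r)\neq 0$, then iterate along the doubling orbit, which returns to $r$ because $2$ is a unit modulo odd $n$; the only cosmetic difference is that you take absolute values and cancel, while the paper divides to write $\hat a(2r)=\hat a(r)/\overline{\hat a(r)}$.
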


\begin{proof}
We have $\hat{a}(-r)=\overline{\hat{a}(r)}$. Let $\overline{\hat{a}(r)} \neq 0$. Then, by the previous lemma, we have $\hat{a}(2r)=\frac{\hat{a}(r)}{\overline{\hat{a}(r)}}$ and so $|\hat{a}(2r)|=1$. Repeating this, we get $|\hat{a}(2^jr)|=1$ for all $j\geq 1$. Since $n$ is odd, so for some $j$ value, $2^jr\equiv r \pmod n$ and so $|\hat{a}(r)|=1$. Thus either $\hat{a}(r)=0$ or $|\hat{a}(r)|=1$.
\end{proof}

\begin{theorem}
The set of idempotents of $\mathbb Z[\mathcal R_n]$ for $n$ odd is $\{0,e_x\;; x\in \mathcal R_n\}$.
\end{theorem}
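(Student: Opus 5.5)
The plan is to combine the constraint on the Fourier coefficients from Lemma \ref{abval} with the Parseval-type identity in Lemma \ref{inv}, and then use integrality of the coefficients $a_x$. Throughout, $u=\sum_{x\in\mathbb Z_n} a_x e_x \in \mathbb Z[\mathcal R_n]$ is an idempotent.

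First I will check that the listed elements really are idempotents. Clearly $0^2=0$. For each $x$, $e_x e_x = e_{x\triangleright x}=e_{2x-x}=e_x$ by the quandle axiom $R_x(x)=x$.

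For the converse, Lemma \ref{abval} (which uses $n$ odd) says that for every $r\in\mathbb Z_n$, either $\hat a(r)=0$ or $|\hat a(r)|=1$. Hence $\sum_{r\in\mathbb Z_n}|\hat a(r)|^2$ equals the number of $r$ with $\hat a(r)\neq 0$, which is at most $n$. Lemma \ref{inv} then gives
\[ n\sum_{x\in\mathbb Z_n} a_x^2 = \sum_{r\in\mathbb Z_n}|\hat a(r)|^2 \le n, \]
so $\sum_x a_x^2\le 1$. Since the $a_x$ are integers, either all $a_x=0$, giving $u=0$, or exactly one $a_x=\pm1$ and the rest vanish, giving $u=\pm e_x$.

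It remains to exclude $u=-e_x$. This follows because $(-e_x)^2=e_x\neq -e_x$ in the free $\mathbb Z$-module. Alternatively, the augmentation $\varepsilon$ is a ring homomorphism, so $\varepsilon(u)\in\{0,1\}$, while $\varepsilon(-e_x)=-1$. Hence the idempotents are exactly $\{0\}\cup\{e_x\,;\,x\in\mathcal R_n\}$.

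I do not expect a real obstacle: the analytic work is already contained in Lemma \ref{abval}. The one point to state carefully is that the bound $\sum_r|\hat a(r)|^2\le n$ comes from counting nonzero Fourier coefficients, and that integrality of the $a_x$ is what turns $\sum a_x^2\le1$ into the rigid conclusion above.
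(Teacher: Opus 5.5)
Your proof is correct and follows the paper's argument: both combine Lemma \ref{abval} with the Parseval identity of Lemma \ref{inv} to get $n\sum_x a_x^2=\#\{r:\hat a(r)\neq 0\}\le n$, then use integrality of the $a_x$. The difference is cosmetic: the paper first splits on $\varepsilon(u)\in\{0,1\}$ to force $m\in\{0,n\}$, whereas you read off $\sum_x a_x^2\le 1$ directly and then rule out $-e_x$, which makes explicit the sign issue the paper settles only implicitly through $\varepsilon(u)=1$.
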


\begin{proof}
Continuing with previous notations, let $m$ be the size of set $\{r \in \mathbb Z_n\;;\hat{a}(r)\neq 0\}$. Then using Lemma \ref{inv} and Lemma \ref{abval}, we get 
\[ m=n \sum\limits_{x\in \mathbb Z_n}a_x^2\]
Since $0\leq m\leq n$ and $\sum\limits_{x}a_x^2$ is a non-negative integer, so $m=0$ or $m=n$.
Since $u$ is an idempotent, under the augmentation map we have $\varepsilon(u)=0$ or $\varepsilon(u)=1$.

If $\varepsilon(u)=0$, then $ \hat{a}(0)=\sum\limits_{x}a_x=0$
Then $m = |\{\hat{a}(r)\neq 0\;; r\in \mathbb Z_n\}|\leq n-1$. So $m=0$. This implies $\hat{a}(r)=0$ for all $r\in \mathbb Z_n$. By using Lemma \ref{inv}, we get $a_x=0$ for all $x\in \mathcal R_n$ and so $u=0$.

If $\varepsilon(u)=1$, then $\hat{a}(0)=\sum\limits_{x}a_x=1$. Then $m = | \{\hat{a}(r)\neq 0\;; r\in \mathbb Z_n\}|\geq 1$. So $m=n$. This implies $\sum\limits_{x}a_x^2=1$. Since $a_x$ for all $x\in \mathcal R_n$ are integers, so the only solutions are $a_x=1$ for $x\in \mathcal{R}_n$ and $a_y=0$ for all $x\neq y\in \mathcal R_n$. Thus $u=e_x,\;x\in \mathcal R_n$.

\end{proof}
\section{Peirce Spectrum of $\mathbb{C}[\mathcal{R}_n]$} \label{sec3}
Let $\mathbb{C}[\mathcal{R}_n]$ be the complex quandle algebra of the dihedral quandle of odd order and $u \in \mathbb C[\mathcal {R} _ n]$ be an idempotent element. Define an mapping $S_u: \mathbb C[\mathcal R_n] \rightarrow \mathbb C[\mathcal R_n], w\mapsto wu$. The map $S_u$ is $\mathbb C$-linear. The Peirce spectrum of $\mathbb C[\mathcal R_n]$ is the set of eigenvalues of operators $S_u$ for all idempotent elements $u$ of $\mathbb C[\mathcal R_n]$. The above technique is also helpful in computing the Peirce spectrum of $\mathbb C[\mathcal R_n]$ for $n$ odd.

\begin{lemma}
Let $\mathbb C[\mathcal R_n]$ be the quandle ring of a dihedral quandle. Let $\zeta=e^{i2\pi/n}$. Then the set $\{f_r\;; r\in \mathbb Z_n\}$ where $f_r= \sum\limits_{x\in \mathbb Z_n} \zeta^{rx}e_x$ is a basis of $\mathbb C[\mathcal R_n]$ as $\mathbb C$-vector space.
\end{lemma}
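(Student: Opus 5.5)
Since $\mathbb C[\mathcal R_n]$ is an $n$-dimensional $\mathbb C$-vector space with basis $\{e_x\;;x\in\mathbb Z_n\}$ and the family $\{f_r\;;r\in\mathbb Z_n\}$ has exactly $n$ members, it suffices to show that the $f_r$ are linearly independent. Equivalently, the change-of-basis matrix $M=(\zeta^{rx})_{r,x\in\mathbb Z_n}$ expressing the $f_r$ in terms of the $e_x$ must be invertible. Note that the statement does not need $n$ odd; the argument below works for every $n$.

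The plan is to exhibit the inverse explicitly using the orthogonality relation of Lemma \ref{ortho}. The natural candidate is
\[ \frac{1}{n}\sum_{r\in\mathbb Z_n}\zeta^{-ry}f_r. \]
Substituting the definition of $f_r$ gives
\[ \frac{1}{n}\sum_{r\in\mathbb Z_n}\zeta^{-ry}f_r=\frac{1}{n}\sum_{x\in\mathbb Z_n}\Big(\sum_{r\in\mathbb Z_n}\zeta^{r(x-y)}\Big)e_x=e_y, \]
where the last equality is exactly Lemma \ref{ortho}. Hence every $e_y$ lies in the span of $\{f_r\}$. So the $f_r$ span the whole $n$-dimensional space, and since there are $n$ of them, they form a basis.

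Alternatively, one can prove linear independence directly. Suppose $\sum_r \lambda_r f_r=0$. The coefficient of $e_x$ gives $\sum_r\lambda_r\zeta^{rx}=0$ for every $x$. Multiplying by $\zeta^{-sx}$, summing over $x$, and applying Lemma \ref{ortho} yields $n\lambda_s=0$, so every $\lambda_s=0$.

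There is no real obstacle here. The only point needing care is the bookkeeping of exponents, specifically that Lemma \ref{ortho} is applied with the roles of the summation variable and the fixed indices interchanged. This is harmless because $\zeta^{r(x-y)}$ is symmetric in how $r$ and $x-y$ enter.
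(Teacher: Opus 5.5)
Your proposal is correct, and your alternative argument is exactly the paper's proof: it shows linear independence by multiplying $\sum_r c_r\zeta^{rx}=0$ by $\zeta^{-sx}$, summing over $x$ and applying Lemma \ref{ortho} to get $nc_s=0$, and then concludes by counting dimensions. Your main argument is the same orthogonality computation run in the other direction. It proves spanning via $e_y=\frac{1}{n}\sum_{r}\zeta^{-ry}f_r$ instead of independence, with the small bonus that it writes down the inverse change of basis explicitly.
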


\begin{proof}
We prove that the set $\{f_r\;;r\in \mathbb Z_n\}$ is a linear independent set. Let $\sum\limits_{r\in \mathbb Z_n} c_rf_r=0$. Then $\sum\limits_{r}c_r \left( \sum\limits_{x\in \mathbb Z_n} \zeta^{rx}e_x \right)=0$, and so $\sum\limits_{x\in \mathbb Z_n}\left( \sum\limits_{r\in \mathbb Z_n}c_r\zeta^{rx} \right)e_x=0$. Since $\{e_x\;; x\in \mathbb Z_n\}$ is a basis of $\mathbb C[\mathcal R_n]$, so $\sum\limits_{r\in \mathbb Z_n}c_r\zeta^{rx}=0$ for all $x\in \mathbb Z_n$. Now we fix $s\in \mathbb Z_n$, multiply the previous equality by $\zeta^{-sx}$ and sum for all $x\in \mathbb Z_n$. We get $\sum\limits_{x\in \mathbb Z_n} \left( \sum\limits_{r\in \mathbb Z_n} c_r\zeta^{rx}\right) \zeta^{-sx}= \sum\limits_{r\in \mathbb Z_n} c_r \sum\limits_{x\in \mathbb Z_n}\zeta^{(r-s)x}=0$. Now by Lemma \ref{ortho}, we get $nc_s=0$ and so $c_s=0$. Since $s$ is arbitrary, so $c_r=0$ for all $r\in \mathbb Z_n$. The size of set $\{f_r\;;r\in \mathbb Z_n\}$ is $n$, hence it spans $\mathbb C[\mathcal R_n]$.
\end{proof}

\begin{theorem}
The Peirce Spectrum of the complex quandle algebra of the dihedral quandle of odd order is $\{0,1,-1\}$.
\end{theorem}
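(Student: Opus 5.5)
The plan is to diagonalize, or at least block-diagonalize, the operators $S_u$ in the Fourier basis $\{f_r\}$ from the preceding lemma, in the same spirit as the proof of Lemma \ref{idemeq}.

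\textbf{Step 1: the multiplication table of the $f_r$.} I first compute $f_rf_s$. We have
\[ f_rf_s=\sum_{x,y}\zeta^{rx+sy}e_{2y-x}. \]
Substitute $z=2y-x$, so that $x=2y-z$. This gives
\[ f_rf_s=\sum_z\zeta^{-rz}\Big(\sum_y \zeta^{(2r+s)y}\Big)e_z. \]
By Lemma \ref{ortho}, the inner sum is $n$ if $s=-2r$ and $0$ otherwise. Hence
\[ f_rf_s=n\,\delta_{s,-2r}\,f_{-r}. \]

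\textbf{Step 2: the idempotent condition in Fourier coordinates.} Write an arbitrary $u\in\mathbb C[\mathcal R_n]$ as $u=\frac1n\sum_s\gamma_sf_s$ with $\gamma_s\in\mathbb C$. Step 1 gives
\[ u^2=\frac1n\sum_r\gamma_r\gamma_{-2r}f_{-r}. \]
So $u^2=u$ is equivalent to $\gamma_{-r}=\gamma_r\gamma_{-2r}$ for all $r$. This is the analogue of Lemma \ref{idemeq}, but now without the relation $\gamma_{-r}=\overline{\gamma_r}$, since $u$ has complex coefficients.

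Also from Step 1, $S_u(f_r)=f_ru=\gamma_{-2r}f_{-r}$. Consequently:
\begin{itemize}
\item $S_u$ preserves $\mathbb Cf_0$ and acts there by $\gamma_0$. The case $r=0$ of the idempotent relation gives $\gamma_0=\gamma_0^2$, so $\gamma_0\in\{0,1\}$.
\item For $r\neq0$ (note $r\neq -r$ since $n$ is odd), $S_u$ preserves the plane spanned by $f_r,f_{-r}$ and acts there by the matrix $\begin{pmatrix}0&\gamma_{2r}\\ \gamma_{-2r}&0\end{pmatrix}$. Its eigenvalues are $\pm\sqrt{P_{2r}}$, where $P_t=\gamma_t\gamma_{-t}$.
\end{itemize}

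\textbf{Step 3: showing $P_t\in\{0,1\}$.} This is the main point, and it replaces the absolute-value argument of Lemma \ref{abval}. Multiply the idempotent relations for $r$ and for $-r$:
\[ \gamma_{-r}\gamma_r=\gamma_r\gamma_{-2r}\cdot\gamma_{-r}\gamma_{2r}, \]
that is, $P_r=P_rP_{2r}$.
\begin{itemize}
\item If $P_r\neq0$, then $P_{2r}=1$, which is nonzero.
\item Iterating, $P_{2^jr}=1$ for all $j\ge1$.
\item Since $n$ is odd, $2^jr\equiv r\pmod n$ for some $j\ge1$, so $P_r=1$.
\end{itemize}
Thus every $P_t\in\{0,1\}$. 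Every eigenvalue of $S_u$ therefore lies in $\{0,1,-1\}$, and the Peirce spectrum is contained in $\{0,1,-1\}$.

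\textbf{Step 4: all three values occur.} Assume $n\ge3$.
\begin{itemize}
\item For $u=e_0$, the operator is $S_u(e_x)=e_{-x}$. This is an involution that is not $\pm\mathrm{id}$, so both $1$ and $-1$ are eigenvalues.
\item For $u=0$, or for $u=\frac1nf_0$ (where $\gamma_0=1$ and all other $\gamma_s=0$), the eigenvalue $0$ occurs.
\end{itemize}

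I expect the only genuinely delicate step to be Step 3. The real-coefficient trick $\hat a(-r)=\overline{\hat a(r)}$ used in Lemma \ref{abval} is no longer available, and the product $P_r$ is the right substitute for $|\hat a(r)|^2$.
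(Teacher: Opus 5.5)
Your proof is correct and follows essentially the same route as the paper. Both use the Fourier basis $f_r$ and the relation $S_u(f_r)=\hat a(2r)f_{-r}$ (your $\gamma_s$ is $\hat a(-s)$), then the product $P(r)=\hat a(r)\hat a(-r)$ with $P(r)=P(r)P(2r)$ iterated using that $n$ is odd, and the same witnesses $u=0$ and $u=e_j$. The differences are cosmetic: you rederive the idempotent relation from $f_rf_s=n\,\delta_{s,-2r}f_{-r}$ (which makes explicit that Lemma~\ref{idemeq} holds over $\mathbb C$), you read eigenvalues off $2\times 2$ blocks instead of $S_u^2$, and your extra witness $u=\frac1n f_0$ covers the convention where only nonzero idempotents count.
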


\begin{proof}
Let $u=\sum\limits_{x}a_xe_x$ be an idempotent element of $\mathbb C[\mathcal R_n]$. Let $\zeta=e^{i2\pi/n}$. We define $\hat{a}(r)=\sum\limits_{x\in \mathbb Z_n}a_x\zeta^{rx}$.
Compute the operator $S_u: \mathbb C[\mathcal R_n] \rightarrow \mathbb C[\mathcal R_n], w \mapsto wu$ on the basis $\{f_r\;; r\in \mathbb Z_n\}$.
We get
\begin{align*}
S_u(f_r)=f_ru&=\left( \sum\limits_{x\in \mathbb Z_n}\zeta^{rx}e_x \right) \cdot \left( \sum\limits_{y\in \mathbb Z_n}a_ye_y\right)\\
&=\sum\limits_{x,y\in \mathbb Z_n}\zeta^{rx}a_ye_{2y-x}\\
&=\sum\limits_{y\in \mathbb Z_n}a_y\zeta^{2ry} \sum\limits_{z\in \mathbb Z_n}\zeta^{-rz}e_z\\
&=\hat{a}(2r)f_{-r}
\end{align*}
where the second last equality is obtained after substituting $z=2y-x$. After applying $S_u$ again we get $S_u^{2}(f_r)=\hat{a}(2r)S_u(f_{-r})=\hat{a}(2r)\hat{a}(-2r)f_r$. We denote $P(r)=\hat{a}(r)\hat{a}(-r)$. Then the preceding equation becomes $S^2_u(f_r)=P(2r)f_r$.
Thus $P(2r)$ an eigenvalue of $S^2_u$. 

Now from Lemma \ref{idemeq}, we have $\hat{a}(r)=\hat{a}(-r)\hat{a}(2r)$ for all $r\in \mathbb Z_n$. Replace $r$ by $-r$, then we have $\hat{a}(-r)=\hat{a}(r)\hat{a}(-2r)$. Multiplying these two equations gives $P(r)=P(r)P(2r)$ for all $r\in \mathbb Z_n$. One possibility is $P(r)=0$. If $P(r)\neq 0$, then $P(2r)=1$. Repeating this, we get $P(2^jr)=1$ for $j\geq 1$. Since $n$ is odd, so for some $j$, we have $2^jr\equiv r \pmod n$ and so $P(r)=1$. Thus, for $r\in \mathbb Z_n$ either $P(r)=0$ or $P(r)=1$.

Thus eigenvalues of $S_u^2$ belong to set $\{0,1\}$ and so the eigenvalues of $S_u$ belongs to $\{0,1,-1\}$. The eigenvalue $0$ is achieved by taking $u=0$, we have $S_u(w)=wu=0$ for all $w\in \mathbb{C}[\mathcal R_n]$. For $u=e_j$, and $w=e_j$, we get $S_u(w)=wu=e_je_j=e_j$. For $u=e_j$ and $w=e_x-e_{2j-x}$ for $x\neq j$. We get $S_u(w)=wu=(e_{x}-e_{2j-x})e_j=-(e_x-e_{2j-x})$.

\end{proof}

\section{Nontrivial idempotents in $\mathbb Z[{\rm Aff}(\mathbb Z_p,f)]$}\label{sec4}

The counterexample to the conjecture \ref{c1} is given in \cite[Theorem 5.1]{Jab26} in the integral quandle ring of affine latin quandle ${\rm Aff}(\mathbb Z_{37},f)$, for $f(x)=18x$. 

Each quandle automorphism linearly extends to the quandle ring automorphism, thus we have ${\rm Aut}(Q) \subseteq {\rm Aut}(\mathbb Z[Q])$. For an idempotent element $u\in \mathbb Z[Q]$ and for $f\in {\rm Aut}(\mathbb Z[Q])$, $f(u)$ is also an idempotent element since we have $f(u)^2=f(u^2)=f(u)$. As an example, if we apply the automorphism $e_x\mapsto e_{x+23}$ of $\mathbb Z[{\rm Aff}(\mathbb Z_{37},f)]$ for $f(x)=18x$, on the idempotent element of \cite[4,Theorem 5.1]{Jab26}, we get the following.

\begin{example}\label{eg1}
Let $Q={\rm Aff}(\mathbb Z_{37},f)$ be affine latin quandle on $\mathbb Z_{37}$ for $f(x)=18x$ then \begin{align*}
u=& e_0+2(e_3+e_4+e_7+e_{30}+e_{33}+e_{34})\\
&\hspace{0.35cm}+4(e_9+e_{12}+e_{16}+e_{21}+e_{25}+e_{28})\\
&\hspace{0.35cm}-2(e_1+e_5+e_6+e_8+e_{10}+e_{11}+e_{13}+e_{14}+e_{18}+e_{19}\\
&\hspace{0.995cm}+e_{23}+e_{24}+e_{26}+e_{27}+e_{29}+e_{31}+e_{32}+e_{36})
\end{align*}
is a nonzero idempotent in $\mathbb{Z}[Q]$.
\end{example}

For an affine latin quandle $Q={\rm Aff}(\mathbb Z_p,f)$ of prime order, we prove that if $\mathbb Z[Q]$ has a nontrivial idempotent $u$ of augmentation $1$, then with the help of such a nontrivial idempotent $u$, we can construct infinitely many nontrivial idempotents in $\mathbb Z[Q]$.

\begin{theorem}
Let the quandle ring $\mathbb Z[Q]$ for a connected quandle $Q={\rm Aff}(\mathbb Z_p,f)$ have a nontrivial idempotent $u$ with augmentation $1$. Then $\mathbb Z[Q]$ has infinitely many nontrivial idempotents.
\end{theorem}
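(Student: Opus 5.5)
The plan is to exploit mediality. For an affine quandle ${\rm Aff}(\mathbb Z_p,f)$ with $x\triangleright y=fx+(1-f)y$, one checks directly that $(x\triangleright y)\triangleright(z\triangleright w)=(x\triangleright z)\triangleright(y\triangleright w)$ on basis elements. Since the multiplication of $\mathbb Z[Q]$ is bilinear, the identity $(vw)(v'w')=(vv')(ww')$ then holds for all $v,w,v',w'\in\mathbb Z[Q]$. Consequently, if $v,w$ are idempotents then $(vw)(vw)=(vv)(ww)=vw$, so the product of two idempotents is again an idempotent, and its augmentation is $\varepsilon(v)\varepsilon(w)$. We also use the quandle automorphisms $\mu_c: x\mapsto cx$ for $c\in\mathbb Z_p^\*$; these are automorphisms because $c(fx+(1-f)y)=f(cx)+(1-f)(cy)$. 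As noted before Example \ref{eg1}, they extend to ring automorphisms of $\mathbb Z[Q]$ that preserve idempotents and augmentation.

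To show that infinitely many distinct idempotents arise, I would measure size with the Fourier transform, exactly as in Section \ref{sec2}. For $v=\sum a_xe_x$ put $\hat v(r)=\sum_x a_x\zeta^{rx}$ with $\zeta=e^{2\pi i/p}$. The computation in Lemma \ref{idemeq} generalizes verbatim to
\[\widehat{vw}(r)=\hat v(fr)\,\hat w((1-f)r),\]
and $\widehat{\mu_c(v)}(r)=\hat v(cr)$. Since $u$ has augmentation $1$, integer coefficients and $u\neq e_x$, we get $\sum a_x^2\ge 2$. Lemma \ref{inv} then gives $\sum_r|\hat u(r)|^2\ge 2p$. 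Because $\hat u(0)=\varepsilon(u)=1$, some $s\neq 0$ has $|\hat u(s)|=M>1$.

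Next I would define $w_1=u$ and $w_{k+1}=\mu_{c_k}(w_k)\cdot\mu_{d}(u)$, and evaluate at $r=1$. Connectedness of $Q$ means $1-f$ is invertible mod $p$, and $f\neq 0$ since $f$ is an automorphism. So I can choose $d$ with $d(1-f)=s$, and choose $c_k$ with $c_kf=s_k$, where $s_k$ is a point at which $|\hat w_k|$ is maximal. Then $|\hat w_{k+1}(1)|=|\hat w_k(s_k)|\cdot M$, so by induction $\max_r|\hat w_k(r)|\ge M^k$. Each $w_k$ is an idempotent of augmentation $1$, by the product observation and the automorphism invariance.

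Finally, Lemma \ref{inv} gives $p\sum_x a_x^{(k)2}=\sum_r|\hat w_k(r)|^2\ge M^{2k}\to\infty$. Hence the coefficient norms of the $w_k$ are unbounded, which yields infinitely many distinct idempotents. None of them is $0$ or a basis element, since those have norm at most $1$. The step I expect to need the most care is the first one: verifying that mediality extends bilinearly to $\mathbb Z[Q]$, and that the alignment of maxima through the scalings $\mu_c$ really forces the supremum to grow. Both rest on $f$ and $1-f$ being units in $\mathbb Z_p$.
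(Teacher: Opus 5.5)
Your proof is correct, but it uses a different mechanism from the paper's.

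The paper never multiplies idempotents inside $\mathbb Z[Q]$. Writing $f(x)=tx$, it encodes $u$ as $A(X)\in\mathbb Z[X]/\langle X^p-1\rangle$ and rewrites $u^2=u$ as $A(X^t)A(X^{1-t})=A(X)$. Since $A(X)\mapsto A(X^t)$ is a ring endomorphism of this commutative ring, every power $A(X)^m$ satisfies the same identity. Distinctness then follows from $A_m(\zeta^r)=A(\zeta^r)^m$ at a single fixed $r$ with $|A(\zeta^r)|\neq 0,1$.

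You instead stay inside the quandle ring. Mediality gives closure of idempotents under the quandle product, and the scalings $\mu_c$ line up the Fourier peaks. The two constructions are closely linked: $\mu_{f^{-1}}(v)\cdot\mu_{(1-f)^{-1}}(w)=\sum_{x,y}a_xb_y e_{x+y}$ is exactly the convolution product. So if you fix $c_k=f^{-1}$ and $d=(1-f)^{-1}$, your recursion gives $\hat w_k=(\hat u)^k$, i.e.\ $w_k$ is the paper's $u_k$. The adaptive choice of the maxima $s_k$ then becomes unnecessary; you can just evaluate at $r=s$.

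What your route buys:
\begin{itemize}
\item a structural explanation, via mediality of affine quandles, of why the paper's powers are idempotent;
\item a larger family of idempotents $\mu_c(u)\mu_d(u)$;
\item a cleaner source of growth: Parseval averaging gives $|\hat u(s)|>1$ directly, bypassing the paper's cyclotomic argument that $A(\zeta^r)\neq 0$.
\end{itemize}
The paper's route buys a closed form and a one-line distinctness proof.

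Two small points you should make explicit:
\begin{itemize}
\item $\mu_{c_k}$ is an automorphism only if $c_k\neq 0$, i.e.\ $s_k\neq 0$. This holds because $|\hat w_k(0)|=\varepsilon(w_k)=1<M^k\le\max_r|\hat w_k(r)|$.
\item Every $w_k$ is nontrivial, not only those with $M^{2k}>p$. This follows from $\max_r|\hat w_k(r)|>1$, since $0$ and each $e_x$ have all Fourier coefficients of modulus at most $1$.
\end{itemize}
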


\begin{proof}
Let $f(x)=tx$ for $t\in \mathbb Z_{p}^{*}$, then the quandle operation is $x\triangleright y=tx+(1-t)y$. Let $s=1-t$, then $s\neq 0$, since $Q$ is connected.  Let $u = \sum\limits_{x\in \mathbb Z_p}a_xe_x\in \mathbb Z[Q]$ be a nontrivial idempotent with augmentation $1$. Let $\zeta=e^{i2\pi/p}$ and define $\hat{a}(r)=\sum\limits_{x\in \mathbb Z_p}a_x\zeta^{rx}$ for all $r\in \mathbb Z_{p}$. We have $u^2=\sum\limits_{x,y\in \mathbb Z_p}a_xa_ye_{tx+sy}$ and so the coefficient of $e_z$ in $u^2$ is $\sum\limits_{tx+sy=z}a_xa_y$.

Construct the associative ring $R=\mathbb Z[X]/\langle X^{p}-1 \rangle$. Using the coefficients of the nontrivial idempotent $u$, define an element of $R$ as $A(X)=\sum\limits_{x=0}^{p-1}a_xX^x$. We have 
\[ A(X^{t})A(X^{s})= \sum\limits_{x,y\in \mathbb Z_p}a_xa_yX^{tx+sy}\]
The coefficients of $X^z$ modulo $X^{p}-1$ in the above expression
are exactly the coefficients of $e_z$ in $u^2$. Thus $u^2=u$ in $\mathbb Z[Q]$ is equivalent to $A(X^{t})A(X^{s})=A(X)$ in $R$.

Now for all positive integers $m\geq 1$, define $A_m(X)=A(X)^m$ and write $A_m(X)=\sum\limits_{z=0}^{p-1}a_z^{(m)}X^z \in R$. Using these coefficients, define $u_m=\sum\limits_{z \in \mathbb Z_p}a_z^{(m)}e_z$. Now we show that $u_m$ is an idempotent. It is enough to show $A_m(X^{t})A_{m}(X^{s})=A_m(X)$. We have
$A_m(X^{t})A_m(X^{s})=A(X^{t})^mA(X^{s})^m=(A(X^{t})A(X^{s}))^m=A(X)^m=A_m(X)$. Hence $u_m$ is an idempotent.

Now we show that $u_m\neq 0$. We have $\varepsilon(u)=\sum\limits_{x\in \mathbb Z_p}a_x=A(1)=1$. Now $\varepsilon(u_m)=\sum\limits_{x\in \mathbb Z_p}a_x^{(m)}=A_m(1)=A(1)^m=1^m=1$.

Now we show that for all $m\geq 1$, $u_m$ are distinct. Define a ring homomorphism $\phi: \mathbb Z[X]/\langle X^{p}-1 \rangle \rightarrow \mathbb C, F(X)\mapsto F(\zeta^r)$. We note that we can always choose $0\neq r\in \mathbb Z_{p}$ such that $|A(\zeta^r)|\neq 1$. Because if not, then $|A(\zeta^r)|=|\hat{a}(r)|=1$ for all $r\neq 0$, and since $A(\zeta^0)=A(1)=1$. Then Lemma \ref{inv} implies $\sum\limits_{x\in \mathbb Z_p}a_x^2=1$ and so $u$ is trivial, which is a contradiction. 
Moreover, we have $A(\zeta^r)\neq 0$, because if $A(\zeta^r)=0$, then $A(X)=c(1+X+\dots+X^{p-1})$ for some $c\in \mathbb Z$ and so $A(1)=cp\neq 1$.

Let $\rho=|A(\zeta^r)|$. Then $\rho>0$ and $\rho\neq 1$. Now $|A_m(\zeta^r)|=|A(\zeta^r)|^m=\rho^m$. Now for $m\neq k$, we get $\rho^m\neq \rho^k$ and so $A_m(\zeta^r)\neq A_k(\zeta^r)$ and consequently $A_m(X)\neq A_k(X)$. Thus $u_m\neq u_k$.

Finally, suppose $u_m=e_j$ for some $j\in \mathbb Z_p$. Then $A_m(X)=X^j$ and so $|A_m(\zeta^r)|=|\zeta^{rj}|=1$ but we have $|A_m(\zeta^r)|=|A(\zeta^r)|^m=\rho^m\neq 1$. Thus $u_m,m\geq 1$ are nontrivial, augmentation $1$, distinct idempotent elements of $\mathbb Z[Q]$.

\end{proof}

As an application of above theorem, for $u$ in Example \ref{eg1}, we have 
\begin{align*}
u_2=& 193e_0-56(e_1+e_{10}+e_{11}+e_{26}+e_{27}+e_{36})-68(e_2+e_{15}+e_{17}+e_{20}+e_{22}+e_{35})\\
&+4(e_3+e_4+e_7+e_{30}+e_{33}+e_{34})+44(e_{5}+e_{13}+e_{18}+e_{19}+e_{24}+e_{32})\\
&-24(e_{6}+e_{8}+e_{14}+e_{23}+e_{29}+e_{31})+68(e_{9}+e_{12}+e_{16}+e_{21}+e_{25}+e_{28})\\
\end{align*}

The image of $u_2$ under augmentation map $\varepsilon: \mathbb Z[Q] \rightarrow \mathbb Z, \sum a_ie_i\mapsto \sum a_i$ is $\sum\limits_{i=0}^{36} a_i=1$. The equality $u_2^2=u_2$ can be verified computationally.

\medskip \noindent \textbf{Acknowledgment.}
The authors acknowledge the use of AI-assisted tools for this article.

\printbibliography
\end{document}